\documentclass[reqno]{amsart}
\usepackage{setspace,amssymb}
\usepackage{ifpdf}
\ifpdf
 \usepackage[hyperindex]{hyperref}
\else
 \expandafter\ifx\csname dvipdfm\endcsname\relax
 \usepackage[hypertex,hyperindex]{hyperref}
 \else
 \usepackage[dvipdfm,hyperindex]{hyperref}
 \fi
\fi
\allowdisplaybreaks[4]
\numberwithin{equation}{section}
\theoremstyle{plain}
\newtheorem{thm}{Theorem}[section]
\newtheorem{cor}{Corollary}[section]
\theoremstyle{remark}
\newtheorem{rem}{Remark}[section]
\DeclareMathOperator{\td}{d\mspace{-2mu}}

\begin{document}

\title[Explicit formulas for Bernoulli and Stirling numbers]
{Explicit formulas for computing Bernoulli numbers of the second kind and\\ Stirling numbers of the first kind}

\author[F. Qi]{Feng Qi}
\address{College of Mathematics, Inner Mongolia University for Nationalities, Tongliao City, Inner Mongolia Autonomous Region, 028043, China}
\email{\href{mailto: F. Qi <qifeng618@gmail.com>}{qifeng618@gmail.com}, \href{mailto: F. Qi <qifeng618@hotmail.com>}{qifeng618@hotmail.com}, \href{mailto: F. Qi <qifeng618@qq.com>}{qifeng618@qq.com}}
\urladdr{\url{http://qifeng618.wordpress.com}}

\keywords{Explicit formula; $n$-th derivative; Reciprocal; Logarithmic function; Bernoulli numbers of the second kind; Stirling number of the first kind; Factorial, Representation; Recursion; Integral representation; Identity; Induction}

\subjclass[2010]{Primary 11B68; Secondary 05A10, 11B65, 11B73, 11B83, 26A24, 33B10}

\begin{abstract}
In the paper, by establishing a new and explicit formula for computing the $n$-th derivative of the reciprocal of the logarithmic function, the author presents new and explicit formulas for calculating Bernoulli numbers of the second kind and Stirling numbers of the first kind. As consequences of these formulas, a recursion for Stirling numbers of the first kind and a new representation of the reciprocal of the factorial $n!$ are derived. Finally, the author finds several identities and integral representations relating to Stirling numbers of the first kind.
\end{abstract}

\thanks{This paper was typeset using \AmS-\LaTeX}

\maketitle

\section{Introduction}

It is general knowledge that the $n$-th derivative of the logarithmic function $\ln x$ for $x>0$ is
\begin{equation}
(\ln x)^{(n)}=(-1)^{n-1}\frac{(n-1)!}{x^n}
\end{equation}
for $n\in\mathbb{N}$, where $\mathbb{N}$ denotes the set of all positive integers.
One may ask a question: What is the formula for the $n$-th derivative of the reciprocal of the logarithmic function $\ln x$? There have been some literature to deal with this question. For example,
Lemma~2 in~\cite{Liu-Qi-Ding-2010-JIS} reads that for any $m\ge0$ we have
\begin{equation}\label{Liu-Qi-Ding-2010-JIS-log-der}
\biggl[\frac1{\ln(1+t)}\biggr]^{(m)} =\frac1{(1+t)^m}\sum_{i=0}^m(-1)^ii!\frac{s(m,i)}{[\ln(1+t)]^{i+1}},
\end{equation}
where $s(n,k)$ are Stirling numbers of the first kind, which are defined by
\begin{equation}\label{s(n-k)-1dfn}
\frac{[\ln(1+x)]^m}{m!}=\sum_{k=m}^\infty\frac{s(k,m)}{k!}x^k,\quad |x|<1.
\end{equation}
\par
The first aim of this paper is to establish a new and explicit formula for computing the $n$-th derivative of the reciprocal of the logarithmic function. As consequences of this formula, a recursion for Stirling numbers of the first kind and a new representation of the reciprocal of the factorial $n!$ are derived.
\par
The Bernoulli numbers $b_0,b_1,b_2,\dotsc,b_n,\dotsc$ of the second kind may be defined by
\begin{equation}\label{bernoulli-second-dfn}
\frac{x}{\ln(1+x)}=\sum_{n=0}^\infty b_nx^n.
\end{equation}
The first few Bernoulli numbers $b_n$ of the second kind are
\begin{align}
b_0&=1,& b_1&=\frac12,& b_2&=-\frac1{12},& b_3&=\frac1{24},& b_4&=-\frac{19}{720},& b_5&=\frac3{160}.
\end{align}
For more information, please refer to~\cite{Howard-Fibon-1994, Jordan-1965-book} and closely related references therein. By the way, we note that the so-called Cauchy number of the first kind may be defined by $n!b_n$. See~\cite{Agoh-Dilcher-2010, Liu-Qi-Ding-2010-JIS} and plenty of references cited therein.
One may also ask a natural question:
Can one discover an explicit formula for computing $b_n$ for $n\in\mathbb{N}$?
There have been several formulas and recurrence relations for computing $b_n$. For example, it is derived in~\cite{Nemes-JIS-2011} that
\begin{equation}
b_n=\frac1{n!}\sum_{k=0}^n\frac{s(n,k)}{k+1},
\end{equation}
where $s(n,k)$ is defined by
\begin{equation}
\prod_{k=0}^{n-1}(x-k)=\sum_{k=0}^ns(n,k)x^k. \label{gen-funct-2}
\end{equation}
\par
We remark that two definitions of $s(n,k)$ by~\eqref{s(n-k)-1dfn} and~\eqref{gen-funct-2} are coincident.
\par
The second aim of this paper is to derive a new and explicit formula for calculating Bernoulli numbers $b_n$ of the second kind.
\par
Finally, we will find several identities and integral representations relating to Stirling numbers of the first kind $s(n,k)$.

\section{Explicit formula for derivatives of the logarithmic function}

In this section, we establish a new and explicit formula for  computing the $n$-th derivative of the reciprocal of the logarithmic function, which will be applied in next section to derive an explicit formula for calculating Bernoulli numbers of the second kind.

\begin{thm}\label{reciprocal=log=der=thm}
For $n\in\mathbb{N}$, we have
\begin{equation}\label{reciprocal=log=der=eq}
\biggl(\frac1{\ln x}\biggr)^{(n)}=\frac{(-1)^n}{x^n}\sum_{i=2}^{n+1}\frac{a_{n,i}}{(\ln x)^{i}},
\end{equation}
where
\begin{equation}\label{a=n=0}
a_{n,2}=(n-1)!
\end{equation}
and, for $n+1\ge i\ge3$,
\begin{equation}\label{a=n=i=eq}
a_{n,i}=(i-1)!(n-1)!\sum_{\ell_1=1}^{n-1} \frac1{\ell_1}\sum_{\ell_2=1}^{\ell_1-1}\frac1{\ell_2}\dotsm \sum_{\ell_{i-3}=1}^{\ell_{i-4}-1}\frac1{\ell_{i-3}} \sum_{\ell_{i-2}=1}^{\ell_{i-3}-1}\frac1{\ell_{i-2}}.
\end{equation}
\end{thm}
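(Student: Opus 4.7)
The natural strategy is induction on $n$, with the heavy lifting done by differentiating the claimed identity and checking that the induced recursion on $a_{n,i}$ is compatible with the nested-sum formula.

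\smallskip

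The base case $n=1$ reduces to the direct computation $(1/\ln x)' = -1/[x(\ln x)^2]$, giving $a_{1,2}=1=0!$, which matches. For the inductive step, I would differentiate the right-hand side of~\eqref{reciprocal=log=der=eq} using only the two facts $(x^{-n})' = -nx^{-n-1}$ and $\bigl[(\ln x)^{-i}\bigr]' = -i/[x(\ln x)^{i+1}]$. Each summand $\frac{(-1)^n a_{n,i}}{x^n(\ln x)^i}$ then contributes two terms, one with the exponent of $\ln x$ unchanged and one with that exponent raised by one. After pulling out the common factor $(-1)^{n+1}/x^{n+1}$ and reindexing the second sum via $j=i+1$, I expect to obtain an expression of the shape
\begin{equation*}
\biggl(\frac{1}{\ln x}\biggr)^{(n+1)} = \frac{(-1)^{n+1}}{x^{n+1}}\sum_{i=2}^{n+2}\frac{a_{n+1,i}}{(\ln x)^i},
\end{equation*}
where
\begin{equation*}
a_{n+1,2}=n\,a_{n,2},\qquad a_{n+1,n+2}=(n+1)\,a_{n,n+1},\qquad a_{n+1,i}=n\,a_{n,i}+(i-1)\,a_{n,i-1}\;\;(3\le i\le n+1).
\end{equation*}

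\smallskip

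The remaining task is to verify that the proposed closed form satisfies these three relations. The boundary relations are routine: $a_{n+1,2}=n!=n\cdot(n-1)!$, while for the top index $a_{n+1,n+2}=(n+1)!$ follows because the nested sum collapses (the only strictly decreasing sequence $1\le\ell_{n}<\cdots<\ell_{1}\le n$ is $\ell_{k}=n+1-k$, contributing $1/n!$). For the interior recursion I would introduce the auxiliary quantity
\begin{equation*}
T(N,k)\;=\;\sum_{1\le\ell_{k}<\ell_{k-1}<\cdots<\ell_{1}\le N}\frac{1}{\ell_{1}\ell_{2}\cdots\ell_{k}},\qquad T(N,0)=1,
\end{equation*}
so that $a_{n,i}=(i-1)!\,(n-1)!\,T(n-1,i-2)$. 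The interior recursion on $a_{n,i}$ then reduces, after cancelling common factorial factors, to the elementary identity $T(N,k)=T(N-1,k)+\frac{1}{N}T(N-1,k-1)$, which is obvious by splitting the outermost sum according to whether $\ell_{1}=N$ or $\ell_{1}\le N-1$.

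\smallskip

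The only real obstacle is clerical: keeping the index shifts straight, making sure that the reindexed second sum exactly produces the boundary term at $i=n+2$ and the mixed term at $3\le i\le n+1$, and tracking the $(i-1)!$ versus $(i-2)!$ factors when matching $a_{n,i-1}$ against the closed form. Once $T(N,k)$ is introduced, however, the verification becomes purely mechanical, and the theorem follows.
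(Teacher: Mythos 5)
Your proposal is correct, and its first half coincides with the paper's proof: differentiate the claimed expansion, pull out $(-1)^{n+1}/x^{n+1}$, reindex, and equate coefficients of $(\ln x)^{-i}$ to obtain exactly the three relations $a_{n+1,2}=na_{n,2}$, $a_{n+1,n+2}=(n+1)a_{n,n+1}$, and $a_{n+1,i}=(i-1)a_{n,i-1}+na_{n,i}$ for $3\le i\le n+1$, seeded by $a_{1,2}=1$. Where you diverge is in how the closed form \eqref{a=n=i=eq} is extracted from this recursion: the paper solves the recursion bottom-up, computing $a_{n,3}$ and $a_{n,4}$ explicitly from the boundary values, stating $a_{n,5}$ and $a_{n,6}$ ``by similar arguments,'' and then asserting the general formula ``inductively'' without carrying out the induction on $i$; you instead verify directly that the proposed closed form satisfies the recursion, by writing $a_{n,i}=(i-1)!\,(n-1)!\,T(n-1,i-2)$ and reducing the interior relation to the elementary splitting identity $T(N,k)=T(N-1,k)+\frac1N T(N-1,k-1)$, with the top boundary handled by the collapse of the nested sum to the single chain giving $a_{n,n+1}=n!$. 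Your route is the tighter one: it replaces the paper's pattern-recognition step with a one-line combinatorial identity and thereby closes the induction completely, at the cost of introducing the auxiliary sums $T(N,k)$; the paper's version is more computational and concrete but leaves the final generalization informal.
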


\begin{proof}
An easy differentiation gives
\begin{multline*}
\biggl(\frac1{\ln x}\biggr)^{(n+1)}=\Biggl[\biggl(\frac1{\ln x}\biggr)^{(n)}\Biggr]'
=\Biggl[\frac{(-1)^n}{x^n}\sum_{i=2}^{n+1}\frac{a_{n,i}}{(\ln x)^i}\Biggr]'\\
=(-1)^n\sum_{i=2}^{n+1}a_{n,i}\biggl[\frac1{x^n(\ln x)^i}\biggr]'
=\frac{(-1)^{n+1}}{x^{n+1}}\sum_{i=2}^{n+1}a_{n,i}\frac{i+n\ln x}{(\ln x)^{i+1}}\\
=\frac{(-1)^{n+1}}{x^{n+1}} \Biggl[\sum_{i=2}^{n+1}\frac{ia_{n,i}}{(\ln x)^{i+1}} +\sum_{i=2}^{n+1}\frac{na_{n,i}}{(\ln x)^{i}}\Biggr]\\
=\frac{(-1)^{n+1}}{x^{n+1}} \Biggl[\sum_{i=3}^{n+2}\frac{(i-1)a_{n,i-1}}{(\ln x)^{i}} +\sum_{i=2}^{n+1}\frac{na_{n,i}}{(\ln x)^{i}}\Biggr]\\
=\frac{(-1)^{n+1}}{x^{n+1}} \Biggl[\frac{na_{n,2}}{(\ln x)^2}+ \sum_{i=3}^{n+1}\frac{(i-1)a_{n,i-1}+na_{n,i}}{(\ln x)^i}+\frac{(n+1)a_{n,n+1}}{(\ln x)^{n+2}}\Biggr].
\end{multline*}
Equating coefficients of $(\ln x)^i$ for $2\le i\le n+2$ on both sides of
\begin{multline*}
\frac{(-1)^{n+1}}{x^{n+1}}\sum_{i=2}^{n+2}\frac{a_{n+1,i}}{(\ln x)^{i}}\\
=\frac{(-1)^{n+1}}{x^{n+1}} \Biggl[\frac{na_{n,2}}{(\ln x)^2}+ \sum_{i=3}^{n+1}\frac{(i-1)a_{n,i-1}+na_{n,i}}{(\ln x)^i}+\frac{(n+1)a_{n,n+1}}{(\ln x)^{n+2}}\Biggr]
\end{multline*}
yields the recursion formulas of the coefficients $a_{n,i}$ satisfying
\begin{align}
a_{n+1,2}&=na_{n,2},\label{a=n+1=n=0-eq}\\
a_{n+1,n+2}&=(n+1)a_{n,n+1},\label{a=n+1=n=n-1-eq}
\end{align}
and
\begin{equation}\label{gener-recursion}
a_{n+1,i}=(i-1)a_{n,i-1}+na_{n,i}
\end{equation}
for $3\le i\le n+1$.
\par
From
\begin{equation*}
\biggl(\frac1{\ln x}\biggr)'=-\frac1{x(\ln x)^2},
\end{equation*}
it follows that
\begin{equation}\label{a=1-0-eq}
a_{1,2}=1.
\end{equation}
Combining~\eqref{a=1-0-eq} with~\eqref{a=n+1=n=0-eq} and~\eqref{a=n+1=n=n-1-eq} respectively results in~\eqref{a=n=0} and
\begin{equation}\label{a=n=n+1=n!}
a_{n,n+1}=n!.
\end{equation}
\par
Letting $i=3$ in~\eqref{gener-recursion} and using~\eqref{a=n=0} produce
\begin{equation}\label{recuring=n+1=n-1-eq}
a_{n+1,3}=2a_{n,2}+na_{n,3}=2(n-1)!+na_{n,3}
\end{equation}
for $n\ge2$. Utilizing~\eqref{a=n=n+1=n!} for $n=2$ as an initial value and recurring~\eqref{recuring=n+1=n-1-eq} figure out
\begin{equation}\label{a=n=3=recurs}
a_{n,3}=2!(n-1)!\sum_{k=1}^{n-1}\frac1k
\end{equation}
for $n\ge2$.
\par
Taking $i=4$ in~\eqref{gener-recursion} and employing~\eqref{a=n=3=recurs} give
\begin{equation}\label{recuring=n+1=n-2-eq}
a_{n+1,4}=3a_{n,3}+na_{n,4}=3\times2(n-1)!\sum_{k=1}^{n-1}\frac1k+na_{n,4}
\end{equation}
for $n\ge3$. Making use of~\eqref{a=n=n+1=n!} for $n=3$ as an initial value and recurring~\eqref{recuring=n+1=n-2-eq} reveal
\begin{equation}\label{a=n=4=recurs}
a_{n,4}=3!(n-1)!\sum_{i=1}^{n-1}\frac1i\sum_{k=1}^{i-1}\frac1k
\end{equation}
for $n\ge3$.
\par
By similar arguments to the deduction of~\eqref{a=n=3=recurs} and~\eqref{a=n=4=recurs}, we have
\begin{equation}\label{n=n-4-eq}
a_{n,5}=4!(n-1)!\sum_{j=1}^{n-1}\frac1j\sum_{i=1}^{j-1}\frac1i\sum_{k=1}^{i-1}\frac1k
\end{equation}
for $n\ge4$ and
\begin{equation}\label{n=n-5-eq}
a_{n,6}=5!(n-1)!\sum_{\ell=1}^{n-1}\frac1\ell\sum_{j=1}^{\ell-1} \frac1j\sum_{i=1}^{j-1}\frac1i\sum_{k=1}^{i-1}\frac1k
\end{equation}
for $n\ge5$.
\par
From~\eqref{a=n=3=recurs}, \eqref{a=n=4=recurs}, \eqref{n=n-4-eq}, and~\eqref{n=n-5-eq}, we inductively conclude the formula~\eqref{a=n=i=eq}. The proof of Theorem~\ref{reciprocal=log=der=thm} is thus completed.
\end{proof}

\begin{cor}
The coefficients $a_{n,i}$ in~\eqref{reciprocal=log=der=eq} satisfies the recursion~\eqref{gener-recursion} for $3\le i\le n+1$.
\end{cor}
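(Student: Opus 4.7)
The plan is to observe that this corollary is essentially a repackaging of a relation that has already been established inside the proof of Theorem~\ref{reciprocal=log=der=thm}. Indeed, after differentiating \eqref{reciprocal=log=der=eq} once and reindexing the resulting sum, one obtains
\begin{equation*}
\biggl(\frac1{\ln x}\biggr)^{(n+1)} =\frac{(-1)^{n+1}}{x^{n+1}} \Biggl[\frac{na_{n,2}}{(\ln x)^2}+ \sum_{i=3}^{n+1}\frac{(i-1)a_{n,i-1}+na_{n,i}}{(\ln x)^i}+\frac{(n+1)a_{n,n+1}}{(\ln x)^{n+2}}\Biggr],
\end{equation*}
and matching this with $\frac{(-1)^{n+1}}{x^{n+1}}\sum_{i=2}^{n+2}\frac{a_{n+1,i}}{(\ln x)^i}$ by comparing the coefficient of $(\ln x)^{-i}$ for each $i$ in the range $3\le i\le n+1$ yields exactly \eqref{gener-recursion}. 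So my first step is simply to invoke this identification from the theorem's proof.

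For a self-contained second proof (which I would include as an alternative, in case the reader prefers not to re-open the theorem's computation), I would verify the recursion directly from the explicit formula \eqref{a=n=i=eq}. Substituting the closed form and cancelling the common factor $(i-1)!\,n!$ reduces the identity $a_{n+1,i}=(i-1)a_{n,i-1}+n a_{n,i}$ to a combinatorial identity between nested harmonic sums; this identity is then handled by peeling off the outermost index $\ell_1$ and splitting the range $1\le \ell_1\le n$ into $\ell_1=n$ (which contributes the $(i-1)a_{n,i-1}$ piece) and $1\le \ell_1\le n-1$ (which contributes $n a_{n,i}$ after absorbing the factor $\frac{1}{n}$ into $n!/(n-1)!$).

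The main obstacle is essentially bookkeeping: keeping the two ranges of $i$ (namely $3\le i\le n+1$ in the corollary versus the boundary cases $i=2$ and $i=n+2$ handled separately by \eqref{a=n+1=n=0-eq} and \eqref{a=n+1=n=n-1-eq}) straight, and ensuring that the direct substitution argument in paragraph two correctly matches the indexing convention in \eqref{a=n=i=eq}. Neither presents any real conceptual difficulty, so the corollary will be short and amounts to pointing back at the computation already carried out.
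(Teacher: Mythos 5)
Your first argument is exactly the paper's proof: the corollary is read off from the coefficient comparison already carried out in the proof of Theorem~\ref{reciprocal=log=der=thm}, which is all the paper itself invokes. Your optional second, self-contained verification (peeling off $\ell_1=n$ from the outermost nested sum in \eqref{a=n=i=eq}) is also correct and is a pleasant bonus, but it is not needed.
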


\begin{proof}
This follows from the proof of Theorem~\ref{reciprocal=log=der=thm}.
\end{proof}

\begin{cor}
For $n\in\mathbb{N}$, the factorial $n!$ meets
\begin{equation}
\frac1{n!}=\sum_{\ell_1=1}^{n} \frac1{\ell_1}\sum_{\ell_2=1}^{\ell_1-1}\frac1{\ell_2}\dotsm \sum_{\ell_{n-1}=1}^{\ell_{n-2}-1}\frac1{\ell_{n-1}} \sum_{\ell_{n}=1}^{\ell_{n-1}-1}\frac1{\ell_{n}}.
\end{equation}
\end{cor}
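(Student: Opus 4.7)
The plan is to extract the identity directly from Theorem~\ref{reciprocal=log=der=thm} by computing the coefficient $a_{n,n+1}$ in two different ways. On the one hand, substituting $i=n+1$ (the top allowed index) into the explicit formula~\eqref{a=n=i=eq} will produce a cascade of $i-2=n-1$ nested sums multiplied by the prefactor $(i-1)!(n-1)!=n!(n-1)!$. On the other hand, the identity~\eqref{a=n=n+1=n!} established along the way in the proof of the theorem records the independently determined closed form $a_{n,n+1}=n!$.

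Equating these two expressions and cancelling the common factor $n!$ will immediately yield
\begin{equation*}
\frac1{(n-1)!}=\sum_{\ell_1=1}^{n-1}\frac1{\ell_1} \sum_{\ell_2=1}^{\ell_1-1}\frac1{\ell_2}\dotsm \sum_{\ell_{n-1}=1}^{\ell_{n-2}-1}\frac1{\ell_{n-1}},
\end{equation*}
and the claim follows after the trivial shift $n\mapsto n+1$, which moves the outer summation range up to $n$ and increases the depth of nesting to $n$ levels.

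I do not expect any real obstacle: all the heavy lifting has already been done in Theorem~\ref{reciprocal=log=der=thm}, and the corollary is really the bookkeeping observation that~\eqref{a=n=i=eq} at the extremal index $i=n+1$ must reproduce the value $n!$ coming from~\eqref{a=n=n+1=n!}. The only care required is to confirm that~\eqref{a=n=i=eq} is valid at $i=n+1$ (it is the maximal index in the statement), to check that this choice yields exactly $n-1$ nested summations with the outer one running from $1$ to $n-1$, and to track the index shift $n\mapsto n+1$ when comparing with the assertion in the corollary.
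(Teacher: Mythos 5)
Your proposal is correct and is exactly the paper's argument: the paper also obtains the identity by equating the explicit formula~\eqref{a=n=i=eq} at $i=n+1$ with the value $a_{n,n+1}=n!$ from~\eqref{a=n=n+1=n!} and simplifying, your index shift $n\mapsto n+1$ being the implicit ``simplifying'' step. No gap remains.
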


\begin{proof}
This follows from combining~\eqref{a=n=i=eq} and~\eqref{a=n=n+1=n!} and simplifying.
\end{proof}

\begin{cor}
Stirling numbers of the first kind $s(n,i)$ for $1\le i\le n$ may be computed by
\begin{equation}
s(n,i)=(-1)^{n+i}(n-1)!\sum_{\ell_1=1}^{n-1} \frac1{\ell_1}\sum_{\ell_2=1}^{\ell_1-1}\frac1{\ell_2}\dotsm \sum_{\ell_{i-2}=1}^{\ell_{i-3}-1}\frac1{\ell_{i-2}} \sum_{\ell_{i-1}=1}^{\ell_{i-2}-1}\frac1{\ell_{i-1}},
\end{equation}
where $s(n,i)$ are defined by~\eqref{s(n-k)-1dfn} or~\eqref{gen-funct-2}.
\end{cor}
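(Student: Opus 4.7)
The plan is to match the expansion~\eqref{reciprocal=log=der=eq} from Theorem~\ref{reciprocal=log=der=thm} with the known expansion~\eqref{Liu-Qi-Ding-2010-JIS-log-der} recalled in the introduction. Substituting $x=1+t$ in~\eqref{reciprocal=log=der=eq} (which is legitimate since differentiation with respect to $x$ coincides with differentiation with respect to $t$ under this shift), one obtains
\begin{equation*}
\biggl[\frac{1}{\ln(1+t)}\biggr]^{(m)} = \frac{(-1)^m}{(1+t)^m}\sum_{i=2}^{m+1}\frac{a_{m,i}}{[\ln(1+t)]^{i}}.
\end{equation*}
Comparing this with~\eqref{Liu-Qi-Ding-2010-JIS-log-der} and equating, after the reindexing $j=i-1$ on the left-hand sum and $j=i$ on the right-hand sum, the coefficients of $[\ln(1+t)]^{-(j+1)}$ for $1\le j\le m$, the plan is to extract the identity
\begin{equation*}
(-1)^m a_{m,j+1} = (-1)^j j!\, s(m,j),
\end{equation*}
equivalently $s(m,j)=(-1)^{m+j}a_{m,j+1}/j!$. (Note that $s(m,0)=0$ for $m\ge 1$, so only these terms participate.)

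Next I would substitute the explicit formulas~\eqref{a=n=0} and~\eqref{a=n=i=eq} for $a_{m,j+1}$ into this relation. The factor $(i-1)!=j!$ in~\eqref{a=n=i=eq} cancels the $j!$ in the denominator, and the remaining factor $(m-1)!$ together with the nested sum in $j-1=i-1$ indices reproduces exactly the right-hand side of the corollary (after renaming $j$ back to $i$ and $m$ to $n$).

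Finally I would verify the boundary case $i=1$ separately: the nested sum in the corollary degenerates to an empty sum, which under the standard convention equals $1$, and the asserted formula collapses to $s(n,1)=(-1)^{n+1}(n-1)!$. This matches the instance of the identity coming from $a_{n,2}=(n-1)!$ via~\eqref{a=n=0}, consistent with the well-known value of $s(n,1)$. There is essentially no hard step here; the only thing requiring care is bookkeeping of indices and the empty-product convention in the $i=1$ case, since the substance of the result is contained in Theorem~\ref{reciprocal=log=der=thm} combined with~\eqref{Liu-Qi-Ding-2010-JIS-log-der}.
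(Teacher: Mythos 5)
Your proposal is correct and follows the same route as the paper, whose proof is exactly the comparison of~\eqref{Liu-Qi-Ding-2010-JIS-log-der} with~\eqref{reciprocal=log=der=eq} (yielding the identity $a_{n,i}=(-1)^{n+i-1}(i-1)!\,s(n,i-1)$, recorded in~\eqref{a(n-i)=s(n=n-1)-eq}) followed by substituting~\eqref{a=n=0} and~\eqref{a=n=i=eq}. Your extra care about the vanishing $s(m,0)$ term and the empty-sum convention at $i=1$ only adds detail the paper leaves implicit.
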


\begin{proof}
This is a direct consequence of comparing the formulas~\eqref{Liu-Qi-Ding-2010-JIS-log-der} and~\eqref{reciprocal=log=der=eq} and rearranging.
\end{proof}

\begin{cor}\label{1st-Stirling-recurs=cor}
For $1\le i\le n$, Stirling numbers of the first kind $s(n,i)$ satisfies the recursion
\begin{equation}\label{1st-Stirling-recurs}
s(n+1,i)=s(n,i-1) -ns(n,i).
\end{equation}
\end{cor}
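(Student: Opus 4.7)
The plan is to derive the Stirling recursion from the coefficient recursion~\eqref{gener-recursion} for $a_{n,i}$ already obtained in the proof of Theorem~\ref{reciprocal=log=der=thm}, via the explicit correspondence between $a_{n,i}$ and $s(n,\cdot)$ furnished by the preceding corollary.

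First, I would match the two expansions~\eqref{Liu-Qi-Ding-2010-JIS-log-der} (with $x=1+t$, with $s(n,0)=0$ absorbing the $i=0$ term, and reindexed by $j=i+1$) and~\eqref{reciprocal=log=der=eq} coefficient by coefficient in powers of $1/(\ln x)^{j}$. This yields the identity
\[
a_{n,i}=(-1)^{n+i-1}(i-1)!\,s(n,i-1),\qquad 2\le i\le n+1,
\]
which is essentially a rewriting of the preceding corollary and is where the alternating sign $(-1)^{n+i-1}$ and the factorial $(i-1)!$ enter the picture.

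Next, I would substitute this identity into~\eqref{gener-recursion}, $a_{n+1,i}=(i-1)a_{n,i-1}+n\,a_{n,i}$ for $3\le i\le n+1$, and set $k=i-1$. Using $(i-1)(i-2)!=(i-1)!=k!$ to equalise the factorials and then cancelling the common factor $k!$, the signs on the three terms work out so that the factor $n$ in the second summand picks up an overall minus sign, leaving exactly
\[
s(n+1,k)=s(n,k-1)-n\,s(n,k)
\]
for $2\le k\le n$, which is~\eqref{1st-Stirling-recurs} in that range. The remaining boundary case $k=1$, which falls outside the range of~\eqref{gener-recursion}, would be handled directly from~\eqref{a=n+1=n=0-eq} together with $s(n,0)=0$: substituting the identification at $i=2$ yields $s(n+1,1)=-n\,s(n,1)$, matching the desired formula.

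I expect the main obstacle to be purely bookkeeping: keeping the two index shifts (from $i$ to $k=i-1$ and from $n$ to $n+1$) straight alongside the alternating sign $(-1)^{n+i-1}$ when the substitution is carried out, and making sure the boundary cases that are excluded from~\eqref{gener-recursion} are recovered from~\eqref{a=n+1=n=0-eq} and~\eqref{a=n+1=n=n-1-eq}. No new analytic input is needed beyond what was already developed for Theorem~\ref{reciprocal=log=der=thm}.
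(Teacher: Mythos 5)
Your proposal is correct and follows essentially the same route as the paper: compare \eqref{Liu-Qi-Ding-2010-JIS-log-der} with \eqref{reciprocal=log=der=eq} to obtain $a_{n,i}=(-1)^{n+i-1}(i-1)!\,s(n,i-1)$ and substitute this into \eqref{gener-recursion}. Your explicit treatment of the boundary case $i=1$ via \eqref{a=n+1=n=0-eq} and $s(n,0)=0$ is a small extra precaution that the paper glosses over, but it is the same argument.
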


\begin{proof}
Comparing formulas~\eqref{Liu-Qi-Ding-2010-JIS-log-der} and~\eqref{reciprocal=log=der=eq} reveals that
\begin{equation}\label{a(n-i)=s(n=n-1)-eq}
a_{n,i}=(-1)^{n+i-1}(i-1)!s(n,i-1)
\end{equation}
for $2\le i\le n+1$. Substituting this into~\eqref{gener-recursion} and simplifying lead to~\eqref{1st-Stirling-recurs}.
\end{proof}

\begin{rem}
The recursion~\eqref{1st-Stirling-recurs} is called in~\cite[p.~101]{Agoh-Dilcher-2010-integer} the ``triangular'' relation which is the most basic recurrence. Corollary~\ref{1st-Stirling-recurs=cor} recovers this triangular relation.
\end{rem}

\begin{rem}
It is helps to include a table of concrete values of the coefficients $a_{n,i}$ for small $n$. See Table~\ref{coefficients}.
\begin{table}[htbp]
\caption{The coefficients $a_{n,i}$}\label{coefficients}
\begin{tabular}{|c||c|c|c|c|c|}
  \hline
 $a_{n,i}$  & $i=2$ & $i=3$ & $i=4$ & $i=5$ & $i=6$ \\\hline\hline
  $n=1$ & $1$ &  &  &  &  \\\hline
  $n=2$ & $1!$ & $2!$ &  &  &  \\\hline
  $n=3$ & $2!$ & $6$ & $3!$ &  &  \\\hline
  $n=4$ & $3!$ & $22$ & $36$ & $4!$ & \\\hline
  $n=5$ & $4!$ & $100$ & $210$ & $240$ & $5!$ \\\hline
  $n=6$ & $5!$ & $548$ & $1350$ & $2040$ & $1800$ \\\hline
  $n=7$ & $6!$ & $3528$ & $9744$ & $17640$ & $21000$ \\\hline
  $n=8$ & $7!$ & $26136$ & $78792$ & $162456$ & $235200$ \\\hline
  $n=9$ & $8!$ & $219168$ & $708744$ & $1614816$ & $2693880$ \\\hline
  $n=10$ & $9!$ & $2053152$ & $7036200$ & $17368320$ & $32319000$ \\\hline
  $n=11$ & $10!$ & $21257280$ & $76521456$ & $201828000$ & $410031600$ \\
  \hline
  \hline
 $a_{n,i}$  & $i=7$ & $i=8$ & $i=9$ & $i=10$ & $i=11$ \\\hline\hline
  $n=6$ & $6!$ &  &  & &  \\\hline
  $n=7$ & $15120$ & $7!$ &  &  & \\\hline
  $n=8$ & $231840$ & $141120$ & $8!$ &  & \\\hline
  $n=9$ & $3265920$ & $2751840$ & $1451520$ & $9!$ &\\\hline
  $n=10$ & $45556560$ & $47628000$ & $35078400$ & $16329600$ & $10!$\\\hline
  $n=11$ & $649479600$ & $795175920$ & $731808000$ & $479001600$ & $199584000$\\
  \hline
\end{tabular}
\end{table}
Basing on the data listed in Table~\ref{coefficients}, we conjecture that the sequence $a_{n,i}$ for $n\in\mathbb{N}$ and $2\le i\le n+1$ is increasing with respect to $n$ while it is unimodal with respect to $i$.
\end{rem}

\begin{rem}
The elementary method and idea in the proof of Theorem~\ref{reciprocal=log=der=thm} has been employed in~\cite{derivative-tan-cot.tex} to establish an explicit formula for computing the $n$-th derivatives of the tangent and cotangent functions. This explicit formula for the $n$-th derivative of the cotangent function has been applied in~\cite{polygamma-sigularity.tex} to build the limit formulas for ratios of two polygamma functions at their singularities.
\end{rem}

\section{Explicit formula for Bernoulli numbers of the second kind}

In this section, basing on Theorem~\ref{reciprocal=log=der=thm}, we establish a new and explicit formula for calculating Bernoulli numbers $b_i$ of the second kind for $i\in\mathbb{N}$.

\begin{thm}\label{Bernulli-2rd=thm}
For $n\ge2$, Bernoulli numbers $b_n$ of the second kind can be computed by
\begin{equation}\label{Bernulli-2rd=formula}
b_n=(-1)^n\frac1{n!}\Biggl(\frac1{n+1}+\sum_{k=2}^{n} \frac{a_{n,k}-na_{n-1,k}}{k!}\Biggr),
\end{equation}
where $a_{n,k}$ are defined by~\eqref{a=n=0} and~\eqref{a=n=i=eq}.
\end{thm}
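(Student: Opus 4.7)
The plan is to read $b_n$ as the $n$-th Taylor coefficient of $\tfrac{x}{\ln(1+x)}$ at $0$ via~\eqref{bernoulli-second-dfn}, factor it as $x\cdot f(x)$ with $f(x)=\tfrac{1}{\ln(1+x)}$, and apply Theorem~\ref{reciprocal=log=der=thm} to $f$. The Leibniz rule gives $(xf)^{(n)}=x\,f^{(n)}+n\,f^{(n-1)}$, so
\[
n!\,b_n=\lim_{x\to 0}\bigl[x\,f^{(n)}(x)+n\,f^{(n-1)}(x)\bigr].
\]
Each of $f^{(n)}$ and $f^{(n-1)}$ has a pole at $x=0$, but the combination is analytic there and the singular parts must cancel.

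Next I would apply Theorem~\ref{reciprocal=log=der=thm}, which transfers verbatim from $\ln x$ to $\ln(1+x)$ (a translation does not affect derivatives), and then perform the substitution $u=\ln(1+x)$, so that $1+x=e^u$ and $x=e^u-1$. This turns the right-hand side into $(-1)^n e^{-nu}\,C(u)$, where
\[
C(u)=(e^u-1)\sum_{i=2}^{n+1}\frac{a_{n,i}}{u^i}-n\,e^u\sum_{i=2}^{n}\frac{a_{n-1,i}}{u^i}.
\]
Because $\bigl(\tfrac{x}{\ln(1+x)}\bigr)^{(n)}$ is analytic at $x=0$ and $x\leftrightarrow u$ is an analytic diffeomorphism near the origin, $e^{-nu}C(u)$ is analytic in $u$ at $u=0$. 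Since $e^{-nu}$ is a unit in the ring of analytic germs, $C(u)$ itself must be free of negative powers of $u$, and therefore $n!\,b_n=(-1)^n c_0$ where $c_0$ is the constant term of $C(u)$.

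It remains to compute $c_0$ directly. The coefficient of $u^i$ in $e^u-1$ (for $i\ge 1$) and in $e^u$ (for $i\ge 0$) equals $\tfrac{1}{i!}$, so extracting these gives
\[
c_0=\sum_{i=2}^{n+1}\frac{a_{n,i}}{i!}-n\sum_{i=2}^{n}\frac{a_{n-1,i}}{i!}=\frac{a_{n,n+1}}{(n+1)!}+\sum_{i=2}^{n}\frac{a_{n,i}-n\,a_{n-1,i}}{i!}.
\]
Using $a_{n,n+1}=n!$ from~\eqref{a=n=n+1=n!}, the isolated term collapses to $\tfrac{1}{n+1}$, and dividing by $n!$ yields exactly~\eqref{Bernulli-2rd=formula}.

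The main obstacle I anticipate is justifying the vanishing of the negative-power coefficients of $C(u)$. The clean one-line argument uses analyticity together with the fact that $e^{-nu}$ is a unit; an alternative, purely algebraic verification would expand the two Laurent-type sums and check cancellation of each coefficient of $u^{-m}$ for $1\le m\le n$, which would require the triangular recursion~\eqref{gener-recursion} combined with induction on $n$, and is considerably more tedious.
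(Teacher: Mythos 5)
Your proposal is correct, and its skeleton is the same as the paper's: write $\frac{x}{\ln(1+x)}=x\cdot\frac1{\ln(1+x)}$, apply the Leibniz rule and Theorem~\ref{reciprocal=log=der=thm}, substitute $u=\ln(1+x)$ so that $x=e^u-1$, identify $n!\,b_n$ with the limit of the $n$-th derivative at the origin via~\eqref{bernoulli-second-dfn}, and use $a_{n,n+1}=n!$ from~\eqref{a=n=n+1=n!} to turn the extreme term into $\frac1{n+1}$. Where you differ is in the evaluation of $\lim_{u\to0}C(u)$: the paper writes $C(u)$ as a quotient with denominator $u^{n+1}$ and applies L'H\^opital's rule $n+1$ times, which amounts to computing $\frac{N^{(n+1)}(0)}{(n+1)!}$ for the numerator $N(u)=u^{n+1}C(u)$, whereas you extract the constant Laurent coefficient of $C(u)$ directly from the exponential series and justify discarding the negative powers by observing that $e^{-nu}C(u)$ coincides near $u=0$ with the analytic function $\bigl[\frac{x}{\ln(1+x)}\bigr]^{(n)}$ evaluated at $x=e^u-1$, and $e^{-nu}$ is an analytic unit. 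The two computations produce the same number $c_0$, but your analyticity argument is cleaner and in fact supplies explicitly the fact that the paper uses only tacitly (namely that the numerator vanishes to order $n+1$ at $u=0$, which is needed for the repeated application of L'H\^opital); in exchange, the paper's route is entirely elementary calculus and never invokes removable singularities or the identity theorem. Your coefficient computation, $c_0=\frac{a_{n,n+1}}{(n+1)!}+\sum_{i=2}^{n}\frac{a_{n,i}-na_{n-1,i}}{i!}$, matches~\eqref{Bernulli-2rd=formula} after dividing by $n!$, so no gap remains; the alternative purely algebraic cancellation check via~\eqref{gener-recursion} that you mention is not needed.
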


\begin{proof}
Differentiating the left-hand side of~\eqref{bernoulli-second-dfn} and making use of Theorem~\ref{reciprocal=log=der=thm} give
\begin{align*}
\biggl[\frac{x}{\ln(1+x)}\biggr]^{(i)}&=x\biggl[\frac1{\ln(1+x)}\biggr]^{(i)} +i\biggl[\frac1{\ln(1+x)}\biggr]^{(i-1)} \\
&=\frac{(-1)^ix}{(1+x)^i}\sum_{k=2}^{i+1}\frac{a_{i,k}}{[\ln(1+x)]^k} +\frac{(-1)^{i-1}i}{(1+x)^{i-1}}\sum_{k=2}^{i}\frac{a_{i-1,k}}{[\ln(1+x)]^k}\\
&=\frac{(-1)^i}{(1+x)^i}\Biggl\{x\sum_{k=2}^{i+1}\frac{a_{i,k}}{[\ln(1+x)]^k} -i(1+x)\sum_{k=2}^{i}\frac{a_{i-1,k}}{[\ln(1+x)]^{k}}\Biggr\}\\
&=\frac{(-1)^i}{(1+x)^i}\frac1{[\ln(1+x)]^{i+1}}\Biggl\{x\sum_{k=2}^{i+1}a_{i,k}[\ln(1+x)]^{i-k+1} \\
&\quad-i(1+x)\sum_{k=2}^{i}a_{i-1,k}[\ln(1+x)]^{i-k+1}\Biggr\}.
\end{align*}
Applying L'H\^ospital rule consecutively and by induction, we have
\begin{align*}
&\quad\lim_{x\to0}\frac{x\sum_{k=2}^{i+1}a_{i,k}[\ln(1+x)]^{i-k+1} -i(1+x)\sum_{k=2}^{i}a_{i-1,k}[\ln(1+x)]^{i-k+1}}{[\ln(1+x)]^{i+1}}\\
&=\lim_{u\to0}\frac{(e^u-1)\sum_{k=2}^{i+1}a_{i,k}u^{i-k+1} -ie^u\sum_{k=2}^{i}a_{i-1,k}u^{i-k+1}}{u^{i+1}}\\
&=\lim_{u\to0}\frac{a_{i,i+1}(e^u-1)-\sum_{k=2}^{i}a_{i,k}u^{i-k+1}+ \sum_{k=2}^{i}(a_{i,k}-ia_{i-1,k})\bigl(e^uu^{i-k+1}\bigr)}{u^{i+1}}\\
&=\frac1{(i+1)!}\lim_{u\to0}\Biggl[a_{i,i+1}(e^u-1)^{(i+1)} -\sum_{k=2}^{i}a_{i,k}\bigl(u^{i-k+1}\bigr)^{(i+1)}\\
&\quad+\sum_{k=2}^{i}(a_{i,k}-ia_{i-1,k})\bigl(e^uu^{i-k+1}\bigr)^{(i+1)}\Biggr]\\
&=\frac1{(i+1)!}\lim_{u\to0}\Biggl[a_{i,i+1}e^u +\sum_{k=2}^{i}(a_{i,k}-ia_{i-1,k})\bigl(e^uu^{i-k+1}\bigr)^{(i+1)}\Biggr]\\
&=\frac1{(i+1)!}\Biggl[a_{i,i+1} +\lim_{u\to0}\sum_{k=2}^{i}(a_{i,k}-ia_{i-1,k}) \sum_{m=0}^{i+1}\binom{i+1}{m}e^u\bigl(u^{i-k+1}\bigr)^{(m)}\Biggr]\\
&=\frac1{(i+1)!}\Biggl[i! +\sum_{k=2}^{i}(a_{i,k}-ia_{i-1,k}) \binom{i+1}{i-k+1}(i-k+1)!\Biggr]\\
&=\frac1{(i+1)!}\Biggl[i! +\sum_{k=2}^{i}(a_{i,k}-ia_{i-1,k}) \frac{(i+1)!}{k!}\Biggr]\\
&=\frac1{i+1}+\sum_{k=2}^{i} \frac{a_{i,k}-ia_{i-1,k}}{k!}.
\end{align*}
This means that
\begin{equation}\label{frac-limit=bernoulli-2rd}
\lim_{t\to0}\biggl[\frac{x}{\ln(1+x)}\biggr]^{(i)}=(-1)^i \Biggl(\frac1{i+1}+\sum_{k=2}^{i} \frac{a_{i,k}-ia_{i-1,k}}{k!}\Biggr).
\end{equation}
Differentiating the right-hand side of~\eqref{bernoulli-second-dfn} and taking limit generate
\begin{equation}\label{series-limit=bernoulli-2rd}
\lim_{x\to0}\Biggl[\Biggl(\sum_{n=0}^\infty b_nx^n\Biggr)^{(i)}\Biggr]
=\lim_{x\to0}\sum_{n=i}^\infty b_n\frac{n!}{(n-i)!}x^{n-i}=i!b_i.
\end{equation}
Equating~\eqref{frac-limit=bernoulli-2rd} and~\eqref{series-limit=bernoulli-2rd} leads to~\eqref{Bernulli-2rd=formula}. The proof of Theorem~\ref{Bernulli-2rd=thm} is complete.
\end{proof}

\begin{cor}\label{x-ln(1+x)-deriv-cor}
For $i\in\mathbb{N}$, we have
\begin{equation}\label{x-ln(1+x)-deriv-eq}
\biggl[\frac{x}{\ln(1+x)}\biggr]^{(i)} =\frac{(-1)^i}{(1+x)^i}\sum_{k=2}^{i+1}\frac{xa_{i,k}-i(1+x)a_{i-1,k}}{[\ln(1+x)]^k}
\end{equation}
and
\begin{equation}\label{x-ln(1+x)-deriv-eq-Stirling}
\biggl[\frac{x}{\ln(1+x)}\biggr]^{(i)} =\frac{(-1)^i}{(1+x)^i}\sum_{k=1}^{i}\frac{(-1)^{i+k}k![xs(i,k) +i(1+x)s(i-1,k)]}{[\ln(1+x)]^{k+1}},
\end{equation}
where $a_{i-1,i+1}=0$ and $s(i-1,i)=0$.
\end{cor}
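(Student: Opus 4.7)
The plan is to derive (3.5) by essentially extracting and completing the opening calculation in the proof of Theorem~\ref{Bernulli-2rd=thm}. Starting from the Leibniz rule applied to $x\cdot\frac1{\ln(1+x)}$, and noting that $x$ has only one nonzero derivative, one obtains
\[
\biggl[\frac{x}{\ln(1+x)}\biggr]^{(i)} = x\biggl[\frac{1}{\ln(1+x)}\biggr]^{(i)} + i\biggl[\frac{1}{\ln(1+x)}\biggr]^{(i-1)}.
\]
Into this I would substitute the closed form supplied by Theorem~\ref{reciprocal=log=der=thm} for each of the two derivatives on the right (with $x$ replaced by $1+x$). Factoring out the common $\frac{(-1)^i}{(1+x)^i}$ via the rewrite $\frac{1}{(1+x)^{i-1}}=\frac{1+x}{(1+x)^i}$ yields two sums whose upper indices differ by one ($k=2,\dots,i+1$ for the first, $k=2,\dots,i$ for the second). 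Adopting the convention $a_{i-1,i+1}=0$ allows the second sum to be extended to $k=i+1$ without change, and the two then merge into the single sum appearing in (3.5).

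For the Stirling-number form (3.6), the plan is to substitute the dictionary $a_{n,k}=(-1)^{n+k-1}(k-1)!\,s(n,k-1)$ from (2.18) directly into (3.5). A quick parity check shows $(-1)^{(i-1)+k-1}=-(-1)^{i+k-1}$, so the minus sign in front of $i(1+x)a_{i-1,k}$ is absorbed and both terms share the common prefactor $(-1)^{i+k-1}(k-1)!$, leaving the bracket $[x\,s(i,k-1)+i(1+x)\,s(i-1,k-1)]$. Relabeling the summation index via $k\mapsto k+1$ (so that the new $k$ runs from $1$ to $i$, $(k-1)!$ becomes $k!$, and $[\ln(1+x)]^k$ becomes $[\ln(1+x)]^{k+1}$) produces exactly (3.6). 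The new top boundary is consistently handled by $s(i-1,i)=0$, which precisely mirrors the convention $a_{i-1,i+1}=0$ used a moment earlier.

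I do not anticipate any genuine obstacle, since the whole argument amounts to combining the Leibniz rule, Theorem~\ref{reciprocal=log=der=thm}, and the correspondence (2.18). The only spots that demand care are the parity tracking of the $(-1)^{n+k}$ signs, the shift between $(k-1)!$ and $k!$ forced by the reindexing, and the matching boundary conventions $a_{i-1,i+1}=0$ and $s(i-1,i)=0$ that render (3.5) and (3.6) formally equivalent.
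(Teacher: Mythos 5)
Your proposal is correct and follows essentially the same route as the paper: the paper also obtains \eqref{x-ln(1+x)-deriv-eq} from the Leibniz-rule computation at the start of the proof of Theorem~\ref{Bernulli-2rd=thm} (i.e.\ $x\bigl[\frac1{\ln(1+x)}\bigr]^{(i)}+i\bigl[\frac1{\ln(1+x)}\bigr]^{(i-1)}$ combined via Theorem~\ref{reciprocal=log=der=thm}), and then gets \eqref{x-ln(1+x)-deriv-eq-Stirling} by substituting \eqref{a(n-i)=s(n=n-1)-eq} and reindexing. Your sign bookkeeping and the boundary conventions $a_{i-1,i+1}=0$, $s(i-1,i)=0$ check out, so nothing further is needed.
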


\begin{proof}
The formula~\eqref{x-ln(1+x)-deriv-eq} can be deduced from the proof of Theorem~\ref{Bernulli-2rd=thm}.
Substituting~\eqref{a(n-i)=s(n=n-1)-eq} into~\eqref{x-ln(1+x)-deriv-eq} and simplifying result in~\eqref{x-ln(1+x)-deriv-eq-Stirling}. The proof is complete.
\end{proof}

\begin{rem}
The formula~\eqref{x-ln(1+x)-deriv-eq-Stirling} is a recovery and reformulation of~\cite[(10), Lemma~2]{Liu-Qi-Ding-2010-JIS}.
\end{rem}

\section{Integral representations of Stirling numbers of the first kind}

In this section, we will find several identities and integral representations relating to Stirling numbers of the first kind $s(n,k)$.

\begin{thm}\label{stirling-sum-gamma-thm}
For $1\le k\le n+1$, we have
\begin{equation}\label{stirling-sum-gamma}
\sum_{i=k-1}^n(-1)^{n+i}\frac{i!(i+1)!s(n,i)}{(i-k+1)!} =\int_0^\infty\frac{\Gamma(u+n)}{\Gamma(u)} \Biggl[\sum_{\ell=0}^{k-1}(-1)^\ell c_{k,\ell}u^{k-\ell}\Biggr]e^{-u}\td u,
\end{equation}
where $\Gamma(u)$ is the classical Euler gamma function which may be defined by
\begin{equation}\label{gamma-dfn}
\Gamma(z)=\int^\infty_0t^{z-1} e^{-t}\td t
\end{equation}
for $\Re z>0$ and
\begin{equation}\label{a-i-k-dfn}
c_{k,\ell}=\binom{k}{\ell}\binom{k-1}{\ell}{\ell!}
\end{equation}
for all $0\le\ell\le k-1$.
\end{thm}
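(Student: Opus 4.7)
The plan is to manipulate the integrand by a Rodrigues-type rewriting of the polynomial factor, integrate by parts to push derivatives onto the Pochhammer ratio $\Gamma(u+n)/\Gamma(u)$, and then expand that rising factorial via Stirling numbers of the first kind.

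The first (and I expect hardest) step is to recognize the identity
\begin{equation*}
\sum_{\ell=0}^{k-1}(-1)^\ell c_{k,\ell}u^{k-\ell} = (-1)^{k+1}e^{u}\frac{\td^{k-1}}{\td u^{k-1}}\bigl(u^k e^{-u}\bigr),
\end{equation*}
which I would verify by applying the Leibniz rule to the right-hand side and checking that the coefficient of $u^{k-j}$ agrees with $(-1)^jc_{k,j}$; this reduces to the elementary equality $c_{k,j}=\binom{k-1}{j}k!/(k-j)!$, which follows directly from \eqref{a-i-k-dfn}. Substituting this identity into the right-hand side of \eqref{stirling-sum-gamma} turns the integral into
\begin{equation*}
(-1)^{k+1}\int_0^\infty \frac{\Gamma(u+n)}{\Gamma(u)}\frac{\td^{k-1}}{\td u^{k-1}}\bigl(u^k e^{-u}\bigr)\td u.
\end{equation*}

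Because every Leibniz term in $(u^k e^{-u})^{(m)}$ carries a positive power of $u$ when $m<k$, all lower-order derivatives $(u^k e^{-u})^{(m)}$ vanish at $u=0$, and they decay at infinity, so integrating by parts $k-1$ times produces a sign $(-1)^{k-1}$ with no boundary contributions and gives
\begin{equation*}
\int_0^\infty \Bigl[\frac{\Gamma(u+n)}{\Gamma(u)}\Bigr]^{(k-1)} u^k e^{-u}\td u.
\end{equation*}

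Finally I would use $\Gamma(u+n)/\Gamma(u)=u(u+1)\cdots(u+n-1)=\sum_{i=0}^{n}(-1)^{n+i}s(n,i)u^i$, which follows from \eqref{gen-funct-2} by the substitution $x\mapsto -u$. Differentiating $k-1$ times kills every term with $i<k-1$ and leaves $\sum_{i=k-1}^{n}(-1)^{n+i}s(n,i)\frac{i!}{(i-k+1)!}u^{i-k+1}$. Multiplying by $u^k e^{-u}$ and evaluating each term with $\int_0^\infty u^{i+1}e^{-u}\td u=(i+1)!$ then collapses the integral to $\sum_{i=k-1}^n(-1)^{n+i}\frac{i!(i+1)!s(n,i)}{(i-k+1)!}$, which is precisely the left-hand side of \eqref{stirling-sum-gamma}. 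The main obstacle is spotting the Rodrigues-type rewriting in the second step; once that is established, the rest is routine Leibniz expansion, integration by parts, and a Gamma integral.
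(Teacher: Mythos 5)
Your proof is correct, but it follows a genuinely different route from the paper. You verify the identity directly: you rewrite the polynomial factor via the Rodrigues-type identity $\sum_{\ell=0}^{k-1}(-1)^\ell c_{k,\ell}u^{k-\ell}=(-1)^{k+1}e^{u}\bigl(u^{k}e^{-u}\bigr)^{(k-1)}$ (which is right, since $c_{k,\ell}=\binom{k}{\ell}\ell!\binom{k-1}{\ell}=\frac{k!}{(k-\ell)!}\binom{k-1}{\ell}$ by \eqref{a-i-k-dfn}), integrate by parts $k-1$ times with vanishing boundary terms, and expand $\frac{\Gamma(u+n)}{\Gamma(u)}=u(u+1)\dotsm(u+n-1)=\sum_{i=0}^{n}(-1)^{n+i}s(n,i)u^{i}$ from \eqref{gen-funct-2}, finishing with $\int_0^\infty u^{i+1}e^{-u}\td u=(i+1)!$; the signs $(-1)^{k+1}(-1)^{k-1}=1$ and the exponent bookkeeping all check out. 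The paper instead works on the function side: it inserts the Stieltjes-type representation \eqref{frac1-ln-x+1-int} of $\frac1{\ln(1+x)}$ into the known derivative formula \eqref{Liu-Qi-Ding-2010-JIS-log-der}, changes variables so that both sides become functions of $t$ with $e^{-u/t}$ under the integral (equation \eqref{gamma-stir-t}), differentiates $k$ times in $t$, and invokes the external formula \eqref{g(t)-derivative} for $\bigl(e^{-1/t}\bigr)^{(k)}$ before letting $t\to1$; this is where the coefficients $c_{k,\ell}$ enter there. Your argument is more self-contained and elementary (it needs only the algebraic definition \eqref{gen-funct-2} of $s(n,i)$, Leibniz, and Gamma integrals, with no appeal to \eqref{Liu-Qi-Ding-2010-JIS-log-der}, \eqref{frac1-ln-x+1-int}, or \eqref{g(t)-derivative}), and it makes transparent that the identity amounts to integrating the rising-factorial expansion against elementary kernels; the paper's derivation, by contrast, exhibits the result as a specialization of the analytic representation of $\frac1{\ln(1+x)}$, which is the thread connecting it to the rest of that section. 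One small presentational point: if you write this up, state explicitly that the boundary terms in each integration by parts involve $\bigl(u^{k}e^{-u}\bigr)^{(m)}$ with $m\le k-2$ multiplied by derivatives of a polynomial, so they vanish at $u=0$ and at $u=\infty$, exactly as you sketched.
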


\begin{proof}
In~\cite{Zhang-Li-Qi-Log.tex}, it was obtained that
\begin{equation}\label{frac1-ln-x+1-int}
\frac1{\ln(1+x)}=\int_0^\infty\frac1{(1+x)^u}\td u,\quad x>0.
\end{equation}
Utilizing this integral representation in~\eqref{Liu-Qi-Ding-2010-JIS-log-der} gives
\begin{equation*}
\int_0^\infty\frac{(-1)^m\Gamma(u+m)}{\Gamma(u)}\frac1{(1+t)^{u+m}}\td u =\frac1{(1+t)^m}\sum_{i=0}^m(-1)^ii!\frac{s(m,i)}{[\ln(1+t)]^{i+1}}.
\end{equation*}
Simplifying this yields
\begin{equation}\label{gamma-stirling}
\int_0^\infty\frac{\Gamma(u+m)}{\Gamma(u)}\frac1{(1+t)^{u}}\td u =\sum_{i=0}^m(-1)^{m+i}i!\frac{s(m,i)}{[\ln(1+t)]^{i+1}}.
\end{equation}
Substituting $t$ for $\frac1{\ln(1+t)}$ in~\eqref{gamma-stirling} brings out
\begin{equation}\label{gamma-stir-t}
\int_0^\infty\frac{\Gamma(u+m)}{\Gamma(u)}e^{-u/t}\td u =\sum_{i=0}^m(-1)^{m+i}i!s(m,i)t^{i+1}.
\end{equation}
Differentiating $1\le k\le m+1$ times with respect to $t$ on both sides of~\eqref{gamma-stir-t} generates
\begin{equation*}
\int_0^\infty\frac{\Gamma(u+m)}{\Gamma(u)}\bigl(e^{-u/t}\bigr)^{(k)}\td u =\sum_{i=k-1}^m(-1)^{m+i}i!s(m,i)\frac{(i+1)!}{(i-k+1)!}t^{i-k+1}.
\end{equation*}
Further letting $t\to1$ in the above equality produces
\begin{equation}\label{lim-to1-eq}
\sum_{i=k-1}^m(-1)^{m+i}i!s(m,i)\frac{(i+1)!}{(i-k+1)!} =\int_0^\infty\frac{\Gamma(u+m)}{\Gamma(u)} \lim_{t\to1}\Bigl[\bigl(e^{-u/t}\bigr)^{(k)}\Bigr]\td u.
\end{equation}
\par
In~\cite{exp-psi-cm-revised.tex} and~\cite[Theorem~2.2]{exp-reciprocal-cm-IJOPCM.tex}, it was obtained that
\begin{equation}\label{g(t)-derivative}
\bigl(e^{-1/t}\bigr)^{(i)}=\frac1{e^{1/t}t^{2i}} \sum_{k=0}^{i-1}(-1)^kc_{i,k}{t^{k}}
\end{equation}
for $i\in\mathbb{N}$ and $t\ne0$, where $c_{i,k}$ is defined by~\eqref{a-i-k-dfn}.
Combining this with
\begin{equation*}
\frac{\td{}^if(ut)}{\td t^i}=u^if^{(i)}(ut)
\end{equation*}
turns out
\begin{align*}
\bigl(e^{-u/t}\bigr)^{(k)}&=\frac{u^{k}}{e^{u/t}t^{2k}} \sum_{\ell=0}^{k-1}(-1)^\ell\frac{c_{k,\ell}}{u^\ell}{t^{\ell}}
\end{align*}
which tends to
\begin{equation*}
\frac{u^{k}}{e^{u}} \sum_{\ell=0}^{k-1}(-1)^\ell\frac{c_{k,\ell}}{u^\ell} =e^{-u} \sum_{\ell=0}^{k-1}(-1)^\ell c_{k,\ell}u^{k-\ell}
\end{equation*}
as $t\to1$. Substituting this into~\eqref{lim-to1-eq} builds~\eqref{stirling-sum-gamma}. Theorem~\ref{stirling-sum-gamma-thm} is proved.
\end{proof}

\begin{thm}\label{2rd-Bernoulli-final-thm}
For $1\le k\le m+1$, we have
\begin{multline}\label{2rd-Bernoulli-final-eq}
\sum_{i=k-1}^m\frac{(-1)^{m+i}i!(i+1)!s(m,i)}{(i-k+1)!} =m!\bigg\{\lim_{t\to1}\frac{\td{}^k}{\td t^k}\biggl[\frac{e^{m/t}}{(e^{1/t}-1)^{m+1}}\biggr] \\
+\int_1^\infty\frac1{[\ln(u-1)]^2+\pi^2} \lim_{t\to1}\frac{\td{}^k}{\td t^k}\biggl[\frac{{e^{m/t}}} {(e^{1/t}-1+u)^{m+1}}\biggr]\td u\biggr\}.
\end{multline}
\end{thm}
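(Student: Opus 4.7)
The plan is to establish, prior to any differentiation in $t$, the closed-form identity
$$\int_0^\infty\frac{\Gamma(u+m)}{\Gamma(u)}e^{-u/t}\,du=m!\left\{\frac{e^{m/t}}{(e^{1/t}-1)^{m+1}}+\int_1^\infty\frac{1}{[\ln(u-1)]^2+\pi^2}\cdot\frac{e^{m/t}}{(e^{1/t}-1+u)^{m+1}}\,du\right\}$$
as an alternative evaluation of the Laplace integral treated in \eqref{gamma-stir-t}. Once this identity is in hand, the theorem follows upon applying $\lim_{t\to1}(d/dt)^k$ to both sides: on the left, the integral equals the polynomial $\sum_{i=0}^m(-1)^{m+i}i!s(m,i)t^{i+1}$ by \eqref{gamma-stir-t}, so its $k$-th derivative at $t=1$ yields the target sum $\sum_{i=k-1}^m(-1)^{m+i}i!(i+1)!s(m,i)/(i-k+1)!$; on the right, the first term directly gives the pointwise limit of the $k$-th derivative, while in the $u$-integral the operators $\lim_{t\to1}$ and $d^k/dt^k$ may be brought inside by dominated convergence, since $(e^{1/t}-1+u)^{-(m+1)}$ and its $t$-derivatives are uniformly bounded in $u$ on a neighbourhood of $t=1$.

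The new closed form is derived by coefficient extraction followed by contour integration. From the Newton expansion $(1-x)^{-u}=\sum_{m\ge0}[\Gamma(u+m)/\Gamma(u)]\,x^m/m!$ together with the elementary Laplace transform $\int_0^\infty(1-x)^{-u}e^{-u/t}\,du=1/[1/t+\ln(1-x)]$ (valid for $|x|<1-e^{-1/t}$), one rewrites
$$\int_0^\infty\frac{\Gamma(u+m)}{\Gamma(u)}e^{-u/t}\,du=m!\,[x^m]\,F(x),\qquad F(x)=\frac{1}{1/t+\ln(1-x)}.$$
The function $F$ is meromorphic on $\mathbb{C}\setminus[1,\infty)$, with a single simple pole at $x_0=1-e^{-1/t}\in(0,1)$ of residue $-e^{-1/t}$ and a logarithmic branch cut along $[1,\infty)$. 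I would apply Cauchy's theorem to $F(x)/x^{m+1}$ on the keyhole domain $\{|x|<R\}\setminus[1,R]$; the decay $F(x)=O(1/\ln|x|)$ discards the large-circle contribution, leaving the residues of $F(x)/x^{m+1}$ at $x=0$ and $x=x_0$ together with the integral of the jump of $F$ across the cut.

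The residue at $x_0$ simplifies to $-e^{-1/t}/x_0^{m+1}=-e^{m/t}/(e^{1/t}-1)^{m+1}$, and with the principal branch the boundary values $\ln(1-y\mp i0)=\ln(y-1)\pm i\pi$ at $y>1$ produce the jump $\Delta F(y)=2\pi i/([1/t+\ln(y-1)]^2+\pi^2)$. Collecting terms gives
$$[x^m]F(x)=\frac{e^{m/t}}{(e^{1/t}-1)^{m+1}}+\int_1^\infty\frac{dy}{([1/t+\ln(y-1)]^2+\pi^2)\,y^{m+1}},$$
and the substitution $u=1+e^{1/t}(y-1)$ sends $1/t+\ln(y-1)\mapsto\ln(u-1)$, $dy\mapsto e^{-1/t}\,du$ and $y^{m+1}\mapsto e^{-(m+1)/t}(e^{1/t}-1+u)^{m+1}$, converting the branch-cut integral into the form asserted. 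The main obstacle will be executing this contour manipulation cleanly---justifying $F(x)=O(1/\ln|x|)$ on $|x|=R$, verifying that the small loop around the branch point $x=1$ (where $F\to0$) contributes nothing in the limit, and keeping a consistent choice of the principal branch between the residue and the jump computations; once the closed-form identity is secured, the differentiation-and-limit step is a routine application of the Leibniz rule under the integral sign.
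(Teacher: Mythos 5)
Your proposal is correct, and it reaches the paper's key intermediate identity by a genuinely different route. The paper simply quotes the Stieltjes representation $\frac1{\ln(1+z)}=\frac1z+\int_1^\infty\frac{1}{[\ln(s-1)]^2+\pi^2}\frac{\td s}{z+s}$ from \cite{Zhang-Li-Qi-Log.tex}, differentiates it $m$ times, equates the result with~\eqref{Liu-Qi-Ding-2010-JIS-log-der}, and then performs the same substitution $\ln(1+\cdot)\mapsto 1/t$ and the final $k$-fold differentiation at $t=1$ that you do. You instead \emph{prove} the needed representation: writing $\int_0^\infty\frac{\Gamma(u+m)}{\Gamma(u)}e^{-u/t}\td u=m!\,[x^m]\frac1{1/t+\ln(1-x)}$ and extracting the coefficient by a keyhole contour, the pole at $x_0=1-e^{-1/t}$ reproduces the term $e^{m/t}/(e^{1/t}-1)^{m+1}$ and the jump across the cut $[1,\infty)$ reproduces the Stieltjes integral; your substitution $u=1+e^{1/t}(y-1)$ then lands exactly on the paper's identity $\sum_{i=0}^m(-1)^{m+i}i!\,s(m,i)t^{i+1}=m!\bigl[\cdots\bigr]$ (the left-hand side coming from~\eqref{gamma-stir-t}), after which the two arguments coincide. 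Your route is longer but self-contained --- it does not rely on the cited representation, whose source the paper itself says corrects an error in \cite{Berg-Pedersen-ball-PAMS} --- and you are more careful than the paper about passing $\lim_{t\to1}$ and $\td{}^k/\td t^k$ through the $u$-integral. Two small points to watch when writing it up: your displayed branch values $\ln(1-y\mp i0)=\ln(y-1)\pm i\pi$ have the signs reversed for the convention $x=y\pm i0$ (if $x=y+i0$ then $1-x$ approaches the negative reals from below, so its principal logarithm is $\ln(y-1)-i\pi$), although the jump $\Delta F=2\pi i/([1/t+\ln(y-1)]^2+\pi^2)$ and the final formula you state are nevertheless correct; and the orientation bookkeeping on the keyhole (outer circle counterclockwise, slit edges contributing $+\int_1^\infty\Delta F(y)y^{-(m+1)}\td y/(2\pi i)$, the pole entering as $-\operatorname{Res}_{x_0}$) must be carried out consistently to obtain the plus sign in front of the cut integral.
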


\begin{proof}
In~\cite{Zhang-Li-Qi-Log.tex}, it was recited that
\begin{equation}
\frac1{\ln(1+z)}=\frac1{z}+\int_1^\infty\frac1{[\ln(t-1)]^2+\pi^2} \frac{\td t}{z+t},\quad z\in\mathcal{A}.
\end{equation}
Here we remark that this formula corrects an error appeared in the proof of~\cite[Theorem~1.3, p.~2130]{Berg-Pedersen-ball-PAMS}. Therefore, by~\eqref{Liu-Qi-Ding-2010-JIS-log-der}, it is easy to see that
\begin{multline*}
(-1)^mm!\biggl[\frac1{t^{m+1}} +\int_1^\infty\frac1{[\ln(u-1)]^2+\pi^2} \frac{\td u}{(t+u)^{m+1}}\biggr]\\ =\frac1{(1+t)^m}\sum_{i=0}^m(-1)^ii!\frac{s(m,i)}{[\ln(1+t)]^{i+1}}.
\end{multline*}
Further replacing $\frac1{\ln(1+t)}$ by $t$ and rearranging reduce to
\begin{multline*}
\sum_{i=0}^m(-1)^{m+i}i!{s(m,i)}{t^{i+1}} =m!\biggl[\frac{e^{m/t}}{(e^{1/t}-1)^{m+1}} \\
+\int_1^\infty\frac1{[\ln(u-1)]^2+\pi^2} \frac{{e^{m/t}}\td u} {(e^{1/t}-1+u)^{m+1}}\biggr].
\end{multline*}
Differentiating $1\le k\le m+1$ times with respect to $t$ on both sides of the above equation creates
\begin{multline*}
\sum_{i=k-1}^m(-1)^{m+i}i!s(m,i)\frac{(i+1)!}{(i-k+1)!}t^{i-k+1}
=m!\bigg\{\frac{\td{}^k}{\td t^k}\biggl[\frac{e^{m/t}}{(e^{1/t}-1)^{m+1}}\biggr] \\
+\int_1^\infty\frac1{[\ln(u-1)]^2+\pi^2} \frac{\td{}^k}{\td t^k}\biggl[\frac{{e^{m/t}}} {(e^{1/t}-1+u)^{m+1}}\biggr]\td u\biggr\}.
\end{multline*}
Further letting $t\to1$ leads to Theorem~\ref{2rd-Bernoulli-final-thm}.
\end{proof}

\end{document}